\DeclareMathOperator{\Var}{Var}
\newcommand{\abs}[1]{\left\lvert#1\right\rvert}
\newcommand{\norm}[1]{\left\lVert#1\right\rVert}
\newcommand{\Reals}{\mathbb{R}}
\newcommand{\transpose}{^{\top}}
\newtheoremstyle{nonum}{}{}{\itshape}{}{\bfseries}{.}{ }{\thmnote{#3}}
\theoremstyle{nonum}
\newtheorem{thm}{}
\newtheorem{lemma}{}
\begin{document}
\title{A simple and general proof for the convergence of Markov processes to their mean-field limits}
\author{Benjamin Armbruster\footnote{Email address:  armbrusterb@gmail.com}}
\date{\today}

\maketitle

\begin{abstract}
Mean-field models approximate large stochastic systems by simpler differential equations that are supposed to approximate the mean of the larger system.  It is generally assumed that as the stochastic systems get larger (i.e., more people or particles), they converge to the mean-field models.  Mean-field models are common in many fields, but their convergence is rarely proved.  The existing approaches rely on operator semigroups, martingales, PDEs, or infinite systems of ODEs.  We give a general proof for their convergence using only Taylor's theorem and basic ODE results.  We hope this allows applied researchers to routinely show convergence of their mean-field models, putting their work on a stronger foundation.
\end{abstract}

\section{Introduction}
Mean-field approximations describe the behavior of large stochastic systems using simpler deterministic differential equations.  Mean-field limits are used in many fields from population biology, the physical sciences, to queuing models.  They are also known as ``fluid limits'' in the context of queueing models and ``mean-field approximations'' when convergence is not the focus.  Intuitively, the concept of a mean-field limit is an extension of the law of large numbers from random variables to stochastic processes.

We start by giving a precise description of the mean-field limit.  One approach is to define the stochastic process $\tilde{X}_n(t)$ describing the state of $n$ individual, usually identical but interacting, particles, $\tilde{X}_n(t):=(\tilde{X}_{n,1}(t),\dotsc,\tilde{X}_{n,n}(t))$, where $\tilde{X}_{n,i}(t)$ denotes the state of the $i$th particle.  Then the mean-field limit is the deterministic process $\bar{x}(t)$, to which the average behavior $X_n(t):=(1/n)\sum_{i=1}^n \tilde{X}_{n,i}(t)$ converges: $X_n(t)\to \bar{x}(t)$ as $n\to\infty$.  If the particles are not interacting (i.e., the $\tilde{X}_{n,i}$ are independent of each other), then this is a simple consequence of the law of large numbers.  Two recent papers, \citep{BHL2013} and \citep{BL2008}, in this stream of literature make the less trivial assumption that any particle is affected by the others, but only through the number of other particles in each state, and not their specific identities.  The papers give an overview of mean-field models and prove convergence results using time-change arguments and techniques from martingales and stochastic approximation algorithms.

We instead will take a slightly more general approach, that is not limited to processes describing the collective state of $n$ particles.  Instead we directly consider the conditions for a sequence of Markov processes to converge to a deterministic limit $X_n(t)\to \bar{x}(t)$ as $n\to\infty$.  Let $\Reals^k$ contain the state space of $X_n$ and we will write $X_{n,i}$ to denote component $i$.  Also let $Q_n$ be the transition rate matrix of $X_n$, defined so that $Q_n(x,\hat{x})$ is the instantaneous rate at which $X_n$ jumps from $x$ to $\hat{x}$, or equivalently, $(1/\epsilon)P[X_n(t+\epsilon)=\hat{x}|X_n(t)=x]\to Q_n(x,\hat{x})$ as $\epsilon\to 0$.  While for simplicity we focus on time-homogenous Markov processes with discrete state spaces, our theory works just as well for time-inhomogeneous Markov processes with continuous state spaces.

Usually, the master equations (also known as the forward equations), a set of differential equations, one for each $x$ in the state space of $X_n$,
\[
	P[X_n(t)=x]' = \sum_{\hat{x}}P[X_n(t)=\hat{x}]Q_n(\hat{x},x),
\]
are the starting point when analyzing a Markov process.
We however will start with the Kolmogorov (backward) equation, which describes the dynamics for an arbitrary expectation of a Markov process:
	\[E[a(X_n(t))]'=\sum_{\hat{x}}P[X_n(t)=\hat{x}]'a(\hat{x})=\sum_{x,\hat{x}}P[X_n(t)=x]Q_n(x,\hat{x})a(\hat{x})=E[c(X_n(t))],\]
where $c(x):=\sum_{\hat{x}}Q_n(x,\hat{x})a(\hat{x})$.  Since by definition $Q_n(x,x)=-\sum_{\hat{x}\neq x}Q_n(x,\hat{x})$, we can also write $c(x)$ more intuitively as $c(x)=\sum_{\hat{x}\neq x}Q_n(x,\hat{x})(a(\hat{x})-a(x))$, where we add the products of the transition rates and the size of the resulting changes in $a(X)$.  This allows us to describe the behavior of the first moment of $X_n$ 
and the sum of the second moments of $X_n$:
\begin{align*}
	E[X_n(t)]'&=E[m_{1,n}(X(t))],\\
	E\Bigl[\sum_i X_{n,i}(t)^2\Bigr]'&=E[m_{2,n}(X(t))],
\end{align*}
where we define $m_{1,n}(x):=\sum_{\hat{x}}Q_n(x,\hat{x})\hat{x}$ and $m_{2,n}(x):=\sum_{\hat{x}}Q_n(x,\hat{x})\sum_i \hat{x}_i^2$, or equivalently 
\begin{align}
	m_{1,n}(x)&=\sum_{\hat{x}\neq x}Q_n(x,\hat{x})(\hat{x}-x),\nonumber\\
\label{eq1}	m_{2,n}(x)&=\sum_{\hat{x}\neq x}Q_n(x,\hat{x})\Bigl(\sum_i\hat{x}_i^2-\sum_i x_i^2\Bigr).
\end{align}
We will need to be careful about keeping track of the various indices.  We will write $m_{1,n,i}$ to denote the $i$th component of $m_{1,n}$.

Let us assume that the mean-field limit, $\bar{x}(t)$, solves a deterministic initial value problem,
\begin{equation}
\label{eq2}	x'=\bar{m}_1(x),\quad x(0)=x_0.
\end{equation}
Here $\bar{m}_{1,i}(x)$ refers to the $i$th component of the vector $\bar{m}_1(x)$ describing the mean-field dynamics.  Natural conditions for the convergence of $X_n(t)\to \bar{x}(t)$ as $n\to\infty$ are $X_n(0)\to x_0$, $m_{1,n}\to\bar{m}_1$, and that the mean-field equation \eqref{eq2} has a unique solution.

Density dependent processes are an example where the limit $m_{1,n}\to\bar{m}_1$ holds naturally.  In that case, we have a process $\tilde{X}_n(t)$ of $n$ particles on $[0,n]^k$.  This process makes jumps of different sizes, where a jump of size $\Delta_i$ from $\tilde{x}$ to $\tilde{x}+\Delta_i$ occurs at a rate $n q_i(\tilde{x}/n)$, which depends on the density, $\tilde{x}/n$, and is proportional to the system size, $n$.  If we examine the scaled process, $X_n(t):=\tilde{X}_n(t)/n$, we find that the mean behavior at $x=\tilde{x}/n$ is $m_{1,n}(x) = \sum_i n q_i(\tilde{x}/n)\Delta_i/n = \sum_i q_i(x)\Delta_i$ and does not depend on $n$.

In addition to the conditions on $X_n(0)$ and $m_{1,n}$, we need conditions ensuring that $\lim X_n$ is deterministic (a random walk converging to Brownian motion is an example where the limit is not deterministic).  Equivalent conditions for $\lim X_n$ being deterministic include, $\sum_i\Var[\lim_n X_{n,i}]=0$; or equivalently $\sum_i E[(\lim_n X_{n,i})^2]-E[\lim_n X_{n,i}]^2=0$.  Taking the derivative of the last condition suggests the following lemma.

\begin{lemma}[Lemma 0]\label{lem0}
If $Y(0)$ is deterministic and $E[\sum_i Y_i(t)^2]'=\sum_i 2E[Y_i(t)]'E[Y_i(t)]$ for $t\leq T$, then $Y(t)$ is deterministic for $t\leq T$.
\end{lemma}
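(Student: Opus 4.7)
The plan is to reduce the statement to the observation that a random variable with zero variance is (almost surely) constant. Concretely, I would introduce the scalar function
\[
V(t) := \sum_i \Var[Y_i(t)] = E\Bigl[\sum_i Y_i(t)^2\Bigr] - \sum_i E[Y_i(t)]^2,
\]
which is well-defined on $[0,T]$ under the implicit integrability assumptions. The goal becomes showing $V(t) = 0$ for $t \leq T$.

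First, I would evaluate $V(0)$. Since $Y(0)$ is deterministic, each $\Var[Y_i(0)] = 0$, so $V(0) = 0$. Next, I would differentiate $V$ on $[0,T]$, using the product rule on the term $E[Y_i(t)]^2$ to obtain
\[
V'(t) = E\Bigl[\sum_i Y_i(t)^2\Bigr]' - \sum_i 2 E[Y_i(t)]' E[Y_i(t)].
\]
By the hypothesis, the right-hand side is zero, so $V$ is constant on $[0,T]$ and equals $V(0) = 0$.

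Finally, since $V(t)$ is a sum of variances, each of which is nonnegative, $V(t) = 0$ forces $\Var[Y_i(t)] = 0$ for every $i$ and every $t \leq T$. This means each $Y_i(t)$ is almost surely equal to its expectation, i.e., $Y(t)$ is deterministic. I do not anticipate any real obstacle; the only subtlety is ensuring that differentiation under the expectation and interchange of derivative with the finite sum are justified, but these are implicitly granted by the hypothesis that the derivatives in the statement exist.
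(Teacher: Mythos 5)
Your proof is correct and is exactly the argument the paper has in mind: the paper dismisses the lemma as ``Obvious,'' but the preceding discussion (differentiating $\sum_i \Var[Y_i(t)] = E[\sum_i Y_i(t)^2]-\sum_i E[Y_i(t)]^2$ and noting it vanishes at $t=0$) is precisely your reduction to a nonnegative quantity with zero initial value and zero derivative. Nothing is missing beyond the integrability/differentiability caveats you already flag.
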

\begin{proof}  Obvious.  \end{proof}

This suggests the additional natural condition
\[ m_{2,n}\to \sum_i 2\bar{x}_i'(t)\bar{x}_i(t)
=:\bar{m}_{2}(\bar{x}(t)) \text{ as $n\to\infty$}.\]
Our main theorem then merely adds the required technical details.

\begin{thm}[Theorem 1]\label{thm1} Suppose that the Markov process $X_n(t)$ is on some bounded convex set $C\subseteq\Reals^k$;
\begin{subequations}
\begin{align}
\label{eq3a}	E[X_n(t)]'&=E[m_{1,n}(X_n(t))];\\
\label{eq3b}	E\Bigl[\sum_i X_{n,i}(t)^2\Bigr]'&=E[m_{2,n}(X_n(t))];
\end{align}
$X_n(0)\to x_0$ in mean-square; $m_{1,n}\to \bar{m}_1$ uniformly; and $m_{2,n}\to \bar{m}_2$ uniformly, where we define
\begin{equation}
\label{eq3c}	\bar{m}_{2}(x):=\sum_i 2\bar{m}_{1,i}(x)x_i.
\end{equation}
\end{subequations}
We further assume that the above functions are defined on $C$; $\bar{m}_1$ is Lipschitz; and $m_{1,n}''$ and $m_{2,n}''$ are continuous and bounded independently of $n$.  Then $X_n(t)$ converges uniformly in mean-square on finite time intervals to the solution, $\bar{x}(t)$, of the mean-field equations, $x'=\bar{m}_1(x)$, $x(0)=x_0$.
\end{thm}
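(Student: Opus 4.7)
The plan is to define $W_n(t) := E[\|X_n(t) - \bar{x}(t)\|^2]$ and to prove $W_n(t)\to 0$ uniformly on $[0,T]$ by a single Gronwall argument. First I would expand the square and differentiate, using \eqref{eq3a}, \eqref{eq3b}, and the ODE $\bar{x}'=\bar{m}_1(\bar{x})$, to obtain
\[
W_n'(t)=E[m_{2,n}(X_n)]-2\bar{m}_1(\bar{x})\cdot E[X_n]-2\bar{x}\cdot E[m_{1,n}(X_n)]+2\bar{x}\cdot\bar{m}_1(\bar{x}).
\]
The remaining job is to bound this right-hand side by a multiple of $W_n$ plus an error vanishing in $n$.

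Next I would invoke Taylor's theorem: because $m_{1,n}''$ and $m_{2,n}''$ are continuous and uniformly bounded, a second-order expansion around $\mu_n(t):=E[X_n(t)]$ (with the linear term vanishing since $E[X_n-\mu_n]=0$) gives
\[
|E[m_{2,n}(X_n)]-m_{2,n}(\mu_n)|\le K_2 V_n,\qquad \|E[m_{1,n}(X_n)]-m_{1,n}(\mu_n)\|\le K_1 V_n,
\]
where $V_n(t):=E[\|X_n(t)-\mu_n(t)\|^2]$ and $K_1,K_2$ are independent of $n$. Uniform convergence then lets me replace $m_{1,n}(\mu_n)$ by $\bar{m}_1(\mu_n)$ and $m_{2,n}(\mu_n)$ by $\bar{m}_2(\mu_n)=2\sum_i\bar{m}_{1,i}(\mu_n)\mu_{n,i}$ at the cost of an additive error $\epsilon_n\to 0$.

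The crucial step is then a clean algebraic cancellation: after these substitutions the deterministic part of $W_n'$ collapses to $2(\mu_n-\bar{x})\cdot(\bar{m}_1(\mu_n)-\bar{m}_1(\bar{x}))$, which the Lipschitz assumption on $\bar{m}_1$ bounds by $2L\|\mu_n-\bar{x}\|^2$. Since $W_n=V_n+\|\mu_n-\bar{x}\|^2$, combining this with the Taylor remainders yields a differential inequality $W_n'(t)\le c\,W_n(t)+\epsilon_n$ for some $n$-independent $c$. Gronwall then gives $W_n(t)\le(W_n(0)+\epsilon_n T)e^{cT}$, and $W_n(0)\to 0$ is precisely the hypothesized mean-square convergence of $X_n(0)$ to $x_0$, so $W_n(t)\to 0$ uniformly on $[0,T]$.

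The hard part will be arranging the Taylor step so that the remainder is genuinely controlled by $V_n$ uniformly in $n$; this requires the Hessian bound to hold on all of $C$, and one must check that $\mu_n(t)\in C$ so that $m_{1,n}$ and $m_{2,n}$ are even defined at $\mu_n$, which follows from convexity of $C$ together with $X_n(t)\in C$ almost surely. Lemma~\ref{lem0} does not enter the estimate directly, but it is what motivates the precise form of $\bar{m}_2$ in \eqref{eq3c} that makes the cancellation work.
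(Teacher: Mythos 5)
Your argument is correct, and it rests on the same two pillars as the paper's proof---the Taylor-with-remainder bound $\abs{E[f(Y)]-f(E[Y])}\leq (b/2)\sum_i\Var[Y_i]$ (the paper's \nameref{lem2}) and a Gronwall estimate (the paper's \nameref{lem3})---but it packages them differently. The paper treats the pair $z_n=(E[X_n],\sum_j E[X_{n,j}^2])$ as the solution of a perturbed ODE $z'=g_n(t,z)$ and compares it, via continuous dependence on parameters, to a reference system $z'=\bar{g}_n(t,z)$ built so that $(\bar{x},\sum_j\bar{x}_j^2)$ solves it exactly; mean-square convergence is then read off from the convergence of the first and second moments. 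You instead run Gronwall on the single scalar quantity $W_n(t)=E[\norm{X_n(t)-\bar{x}(t)}^2]=V_n+\norm{\mu_n-\bar{x}}^2$, and the role that the paper's variance term $z_2-\sum_j z_{1,j}^2$ plays in making $(\bar{x},\sum_j\bar{x}_j^2)$ an exact solution is played in your version by the identity $2\bar{m}_1(\mu_n)\cdot\mu_n-2\bar{m}_1(\bar{x})\cdot\mu_n-2\bar{x}\cdot\bar{m}_1(\mu_n)+2\bar{x}\cdot\bar{m}_1(\bar{x})=2(\mu_n-\bar{x})\cdot(\bar{m}_1(\mu_n)-\bar{m}_1(\bar{x}))$, which checks out and is precisely where definition \eqref{eq3c} of $\bar{m}_2$ enters. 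Your route is slightly leaner: you never need $\bar{m}_2$ (hence $\bar{g}_n$) to be Lipschitz---only $\bar{m}_1$---and you land directly on the mean-square error, whereas the paper's route yields convergence of $E[X_n]$ and $\sum_j E[X_{n,j}^2]$ separately as a byproduct. The two caveats you flag at the end (that $\mu_n(t)\in C$ by convexity so the Taylor expansion at $\mu_n$ is legitimate, and that the Hessian bounds must be uniform in $n$ so $K_1,K_2$ do not depend on $n$) are exactly the right ones, and the same care is needed, implicitly, in the paper's version.
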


Recall that the convergence of a random vector to a constant in mean-square, $Y_n\to y$, means that $E[\norm{Y_n-y}^2]\to 0$, or equivalently that the first and second moments converge, $E[Y_n]\to y$ and $E[Y_{n,i}Y_{n,j}]\to y_iy_j$ for any $i,j$.  Uniform convergence on finite time intervals means that $\sup_{t\leq T}\norm{X_n(t)-\bar{x}(t)}\to 0$.  Note that the Lipschitz condition on $\bar{m}_1$ ensures the existence and uniqueness of the mean-field solution.  Also note that $m_{1,n,i}''(x)$ and $m_{2,n}''(x)$ are $k\times k$ Hessian matrices.

The first to prove such results for the convergence of Markov processes to deterministic differential equations was Kurtz in 1970 \citep{Kurtz1970}.  That proof used advanced tools from operator semigroup theory.  The follow-up paper used martingale techniques \citep{Kurtz1971} (see also the book \citep{EthierKurtz}).  The main contribution of this paper is to provide a short proof using only elementary techniques.  A smaller contribution is that we prove convergence in mean-square which is slightly stronger than the convergence in probability proved by Kurtz.

Our approach using simple tools from ODE theory, extends previous work by the author and Beck \citep{ABSIS,BASIR}.  In particular our differential equation for the sum of second moments \eqref{eq3c} is similar to the differential equation for the sum of the variances in \cite{BASIR}. The previous work is specific to the susceptible-infected-susceptible (SIS) and susceptible-infected-recovered (SIR) models.  It also relies on the standard ODE result for the continuous dependence on parameters but uses other moment bounds instead of one derived from a simple Taylor expansion, which we use here.  For the special cases of SIS and SIR models, there are also proofs using PDEs \citep{STK2011} and infinite systems of ODEs \citep{SK2012}. 

In Section~\ref{sec:proof} we prove \nameref{thm1}.  Then in Section~\ref{sec:example}, we go over our proof on a simple example, before showing how to apply it to a non-trivial one.  We conclude in Section~\ref{sec:conclusion}.
 
\section{Proof of Main Theorem}\label{sec:proof}
For any symmetric matrix $A$, let $\rho(A)$ be its spectral radius.

\begin{lemma}[Lemma 2]\label{lem2}
Suppose $Y$ is a random variable on some convex set $C\subseteq\Reals^k$, and $f''$ is defined on $C$, continuous, and $\rho(f''(x))\leq b$.  Then $E[f(Y)]=f(E[Y])+h\sum_i \Var[Y_i]$, where $\abs{h}\leq b/2$.
\end{lemma}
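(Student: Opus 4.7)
The plan is to apply the multivariate Taylor expansion of $f$ around the mean $\mu := E[Y]$ to second order with the Lagrange form of the remainder, then take expectations. First I would note that since $C$ is convex and $Y \in C$ almost surely, we have $\mu \in C$, so $f$ and $f''$ are defined at $\mu$.

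Next, for each realization of $Y$, Taylor's theorem gives a point $\xi$ on the segment from $\mu$ to $Y$ (which lies in $C$ by convexity) such that
\[
f(Y) = f(\mu) + \nabla f(\mu)\transpose (Y-\mu) + \tfrac{1}{2}(Y-\mu)\transpose f''(\xi) (Y-\mu).
\]
Taking expectation, the linear term vanishes because $E[Y-\mu]=0$, leaving
\[
E[f(Y)] - f(\mu) = \tfrac{1}{2} E\bigl[(Y-\mu)\transpose f''(\xi)(Y-\mu)\bigr].
\]

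Then I would use the standard fact that for a symmetric matrix $A$ with spectral radius $\rho(A)\leq b$, the quadratic form satisfies $\abs{v\transpose A v}\leq b\norm{v}^2$ for all $v$. Applying this pointwise with $A=f''(\xi)$ (symmetric because it is a Hessian) and $v=Y-\mu$ gives
\[
\abs{E[f(Y)]-f(\mu)} \leq \tfrac{b}{2}E[\norm{Y-\mu}^2] = \tfrac{b}{2}\sum_i \Var[Y_i],
\]
using $E[\norm{Y-\mu}^2]=\sum_i \Var[Y_i]$. Finally I would define $h := (E[f(Y)]-f(\mu))/\sum_i\Var[Y_i]$ when the denominator is nonzero, so $\abs{h}\leq b/2$ by the bound; in the degenerate case $\sum_i\Var[Y_i]=0$, the variable $Y$ equals $\mu$ almost surely, so $E[f(Y)]=f(\mu)$ and any $h$ (say $h=0$) works.

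I do not expect any real obstacle: the argument is essentially a single-point Taylor expansion with Lagrange remainder, the spectral radius bound on a quadratic form, and a direct expectation. The only minor subtlety is that $\xi$ is a function of $Y$ (and in particular its measurability is not addressed, but we only need the inequality pointwise before integrating), and we should remark that $\mu\in C$ so that $f''(\xi)$ makes sense and the spectral-radius hypothesis applies along the segment.
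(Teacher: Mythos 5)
Your proposal is correct and follows essentially the same route as the paper's proof: a second-order Taylor expansion about $E[Y]$ with Lagrange remainder, vanishing of the linear term under expectation, the spectral-radius bound on the quadratic form, and the same definition of $h$ (with $h=0$ in the degenerate case). No gaps.
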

\begin{proof}
This is a simple extension of Taylor's theorem with remainder.  We now consider $f(\cdot)$ along the line between $Y$ and $E[Y]$ and apply the scalar Taylor's theorem with the Lagrange form of the remainder around the value $E[Y]$,
\[
	f(Y)=f(E[Y])+f'(E[Y])\transpose (Y-E[Y])+(1/2)(Y-E[Y])\transpose f''(\tilde{Y}) (Y-E[Y]),\] 
for some $\tilde{Y}$ between $Y$ and $E[Y]$ (that depends on the value of $Y$).  
Rearranging,
\[
	f(Y)-f(E[Y])-f'(E[Y])\transpose (Y-E[Y])=(1/2)(Y-E[Y])\transpose f''(\tilde{Y}) (Y-E[Y]).
\]
Taking expectations, the $Y-E[Y]$ term on the left hand side vanishes,
\begin{equation}\label{eq:taylor}
 E[f(Y)]-f(E[Y])=(1/2)E\Bigl[(Y-E[Y])\transpose f''(\tilde{Y}) (Y-E[Y])\Bigr].
\end{equation}
Since $\rho(f''(\tilde{Y}))\leq b$, we can bound the quadratic term: 
\[ \abs{(Y-E[Y])\transpose f''(\tilde{Y}) (Y-E[Y])}\leq b\norm{Y-E[Y]}_2^2 = b\sum_i (Y_i-E[Y_i])^2.\]
Since this bound holds for every realization of $Y$, it also holds in expectation:
\begin{multline*}
 \abs{E\bigl[(Y-E[Y])\transpose f''(\tilde{Y}) (Y-E[Y])\bigr]}
 \leq E\left[\abs{(Y-E[Y])\transpose f''(\tilde{Y}) (Y-E[Y])}\right]\\
 \leq b\sum_i E[(Y_i-E[Y_i])^2] = b\sum_i \Var[Y_i].
\end{multline*}
Taking the absolute value of both sides of \eqref{eq:taylor} and then applying the bound,
\[ \abs{E[f(Y)]-f(E[Y])}\leq (b/2)\sum_i \Var[Y_i]. \]
Choosing $h=(E[f(Y)]-f(E[Y]))/\sum_i \Var[Y_i]$ proves the claim (and if $\sum_i \Var[Y_i]=0$, we set $h=0$).
\end{proof}

Suppose $\rho(m_{1,n,i}''(x)),\rho(m_{2,n}''(x))\leq b_2$.  Then we use \nameref{lem2} to define $h_{1,n,i}(t)$ and $h_{2,n}(t)$ so that
\begin{align*}
	E[m_{1,n,i}(X(t))]&=m_{1,n,i}(E[X(t)])+h_{1,n,i}(t)\sum_j \Var[X_j(t)],\\
	E[m_{2,n}(X_n(t))]&=m_{2,n}(E[X(t)])+h_{2,n}(t)\sum_j \Var[X_j(t)],
\end{align*}
and $\abs{h_{1,n,i}(t)},\abs{h_{2,n}(t)}\leq b_2/2$.  We then define $z_n:=(E[X_n],\sum_j E[X_{n,j}^2])$, which solves the initial value problem, $z'=g_n(t,z)$ and $z(0)=z_{0,n}$, where
\begin{align*}
	g_{1,n,i}(t,z)&:=m_{1,n,i}(z_1)+h_{1,n,i}(t)\Bigl(z_2-\sum_j z_{1,j}^2\Bigr),\\
	g_{2,n}(t,z)&:=m_{2,n}(z_1)+h_{2,n}(t)\Bigl(z_2-\sum_j z_{1,j}^2\Bigr),\\
	z_{0,n}&:=\bigl(X_n(0),\sum_j X_{n,j}(0)^2\bigr).
\end{align*}
Note that $z_2-\sum_j z_{1,j}^2 = \sum_j \Var[X_j]$. 
We will compare this to the initial value problem, $z'=\bar{g}_n(t,z)$ and $z(0)=\bar{z}_0$, where
\begin{align*}
	\bar{g}_{1,n,i}(t,z)&:=\bar{m}_{1,i}(z_1)+h_{1,n,i}(t)\Bigl(z_2-\sum_j z_{1,j}^2\Bigr),\\
	\bar{g}_{2,n}(t,z)&:=\bar{m}_{2}(z_1)+h_{2,n}(t)\Bigl(z_2-\sum_j z_{1,j}^2\Bigr),\\
	\bar{z}_0&:=\bigl(x_0,\sum_j x_{0,j}^2\bigr),
\end{align*}
Here $(\bar{x},\sum_j \bar{x}_j^2)$ is a solution because it makes the term for the sum of variances, $z_2-\sum_j z_1^2$, vanish, and $(\sum_j \bar{x}_j^2)'=\bar{m}_2(\bar{x})$ due to \eqref{eq3c}.

\begin{lemma}[Lemma 3]\label{lem3}
Consider the initial value problems $x'=f_1(t,x)$, $x(0)=x_1$ and $x'=f_2(t,x)$, $x(0)=x_2$ with solutions $\phi_1(t)$ and $\phi_2(t)$ respectively.  If $f_1$ is Lipschitz in $x$ with constant $L$; $\norm{x_2-x_1}\leq \Delta$; and $\norm{f_1(t,x)-f_2(t,x)}\leq M$, then $\norm{\phi_1(t)-\phi_2(t)}\leq (\Delta+M/L)e^{Lt}-M/L$.
\end{lemma}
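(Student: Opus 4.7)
The plan is to recast both initial value problems in integral form, compare them by a standard triangle-inequality split, and then close with Gronwall's inequality.

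First I would write $\phi_i(t)=x_i+\int_0^t f_i(s,\phi_i(s))\,ds$ for $i=1,2$, subtract, and apply the triangle inequality to obtain
\[
\norm{\phi_1(t)-\phi_2(t)}\leq \norm{x_1-x_2}+\int_0^t \norm{f_1(s,\phi_1(s))-f_2(s,\phi_2(s))}\,ds.
\]
Next I would split the integrand by adding and subtracting $f_1(s,\phi_2(s))$: the Lipschitz hypothesis on $f_1$ bounds $\norm{f_1(s,\phi_1(s))-f_1(s,\phi_2(s))}$ by $L\norm{\phi_1(s)-\phi_2(s)}$, and the hypothesis $\norm{f_1(t,x)-f_2(t,x)}\leq M$ bounds the remainder by $M$. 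Writing $u(t):=\norm{\phi_1(t)-\phi_2(t)}$, this gives the integral inequality
\[
u(t)\leq \Delta+Mt+L\int_0^t u(s)\,ds.
\]

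The final step is to solve this inequality so that the stated closed-form bound comes out exactly. The clean way is the substitution $v(t):=u(t)+M/L$; a quick rearrangement of the above inequality yields
\[
v(t)\leq (\Delta+M/L)+L\int_0^t v(s)\,ds,
\]
and Gronwall's lemma then gives $v(t)\leq (\Delta+M/L)e^{Lt}$, i.e.\ $u(t)\leq (\Delta+M/L)e^{Lt}-M/L$, which is the claim.

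There is no real obstacle here; the only thing to be careful about is matching the exact form of the right-hand side, which is why the $M/L$ shift is convenient (a direct application of Gronwall to $u$ would give the looser-looking bound $(\Delta+Mt)e^{Lt}$, whereas the shift produces the sharper form claimed in the lemma). The proof uses only the fundamental theorem of calculus, the Lipschitz assumption, and the scalar Gronwall inequality, so it stays within the elementary ODE toolkit the paper advertises.
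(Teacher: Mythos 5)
Your proof is correct and follows essentially the same route as the paper's: both split $f_1(\cdot,\phi_1)-f_2(\cdot,\phi_2)$ by adding and subtracting $f_1(\cdot,\phi_2)$, use the Lipschitz and uniform-difference hypotheses, and close with Gronwall. The only cosmetic difference is that you work with the integral inequality and the $M/L$ shift, while the paper states the differential inequality $\norm{u'}\leq L\norm{u}+M$ and compares directly with the solution of $\theta'=L\theta+M$, $\theta(0)=\Delta$.
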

\begin{proof}  This is a standard result about the continuous dependence of ODE solutions on parameters and initial conditions.  We give a proof in the Appendix.  \end{proof}

Using the definition \eqref{eq3c}, $\bar{m}_2(z_1)$ is Lipschitz because $z_1$ is bounded (i.e., $C$ is bounded) and each $\bar{m}_{1,i}$ is Lipschitz.  Since, $\bar{m}_1$ and $\bar{m}_2$ are Lipschitz and $h_{1,n}$, $h_{2,n}$, and $z_1$ are bounded, then $\bar{g}_n(t,z)$ is Lipschitz in $z$ with a Lipschitz constant independent of $n$.
Let $M_n:=\norm{g_n(t,z)-\bar{g}_n(t,z)}$ and $\Delta_n:=\norm{z_{0,n}-\bar{z}_0}$.  This allows us to apply \nameref{lem3}, which for finite time intervals bounds $\norm{z_n-(\bar{x},\sum_j \bar{x}_j^2)}$.  Since $\Delta_n\to0$ and uniform convergence implies 
\[ M_n=\norm{(m_{1,n}(x),m_{2,n}(x))-(\bar{m}_1(x),\bar{m}_2(x))}\to 0,\]
the bound from \nameref{lem3} goes to zero, proving that $z_n\to (\bar{x},\sum_j \bar{x}_j^2)$ uniformly on finite time intervals.  This proves the theorem.

\section{Example}\label{sec:example}
We now consider two examples.  In the first example we use the the simple, scalar, SIS model to review the steps of the proof.  The second example is a more complicated epidemic model, where we show how easy it is to check the conditions of \nameref{thm1}.

\subsection{Simple SIS Model}
In the susceptible-infected-susceptible (SIS) model, we have a population of $n$ individuals, of which $\tilde{X}_n(t)$ are infected.  This is a birth-death process, where given $\tilde{x}$ infected nodes, an additional infection occurs (increasing $\tilde{x}$ by one) at a rate $\beta \tilde{x}(1-\tilde{x}/n)$ and an infection recovers (decreasing $\tilde{x}$ by one) at a rate $\gamma \tilde{x}$.  These can be written as density dependent rates, $n q_+(\tilde{x}/n)$ and $n q_-(\tilde{x}/n)$ for an increase and decrease respectively, with $q_+(x)=\beta x(1-x)$ and $q_-(x)=\gamma x$.

Our first step is to define the scaled process, $X_n(t)=\tilde{X}_n(t)/n$, on $[0,1]$, which increases and decreases by $1/n$ at rates $n q_+(x)$ and $n q_-(x)$ respectively.  From the definition of $m_{1,n}$, 
\begin{equation}\label{eq:m1SIS}
	m_{1,n}(x) = +(1/n)(nq_+(x))+(-1/n)(nq_-(x))
		=\beta x(1-x) - \gamma x
		=(\beta-\gamma) x - \beta x^2.
\end{equation}
Since $m_{1,n}$ is already quadratic, the application of \nameref{lem2}, involves merely taking the expectation of $m_{1,n}(X)$ and substituting $E[X^2]=E[X]^2+\Var[X]$:
\[ E[m_{1,n}(X)] = m_{1,n}(E[X]) - \beta \Var[X]
= m_{1,n}(E[X]) + h_{1,n} \Var[X], \]
where $h_{1,n}=-\beta$.

Turning to the second moment, we start with the definition,
\begin{align}
	m_{2,n}(x) =& nq_+(x)((x+1/n)^2-x^2) + nq_-(x)((x-1/n)^2-x^2)\nonumber\\
	 =& 2x(\beta x(1-x)-\gamma x) + (1/n)(\beta x(1-x)+\gamma x)\label{eq:m2SIS}\\
	 =& (\beta/n+\gamma/n) x + (2\beta-2\gamma-\beta/n)x^2 - 2x^3.\nonumber
\end{align}
Then $m''_{2,n}(x) = 2(2\beta-2\gamma-\beta/n) + 12x$, and since $x\in[0,1]$ we have the bound $\abs{m''_{2,n}(x)}\leq 2\abs{2\beta-2\gamma-\beta/n} + 12$.  Applying \nameref{lem2},
\[ E[m_{2,n}(X)] = m_{2,n}(E[X])+h_{2,n}\Var[X], \]
for some $\abs{h_{2,n}}\leq \abs{2\beta-2\gamma-\beta/n} + 6$.
Essentially what we did was the following: (1) apply a first order Taylor expansion with remainder to $x^3$ to derive the (loose but valid) bound $0\leq x^3\leq 3x^2$ for $x\in[0,1]$; (2) apply this bound to $m_{2,n}$ to derive a quadratic right hand side; and then take expectations and apply again the substitution $E[X^2]=E[X]^2+\Var[X]$.

Applying the reverse substitution, $\Var[X]=E[X^2]-E[X]^2$, to our equations for $E[m_{1,n}(X)]$ and $E[m_{2,n}(X)]$ we arrive at a closed system of two differential equations for the first two moments: $E[X_n(t)]$ and $E[X_n(t)^2]$:
\begin{align*}
	E[X_n(t)]'=E[m_{1,n}(X_n(t))]=m_{1,n}(E[X_n(t)]) 
	- \beta (E[X_n(t)^2]-E[X_n(t)]^2)\\
	E[X_n(t)^2]'=E[m_{2,n}(X_n(t))]
	=m_{2,n}(E[X_n(t)]) 
	+ h_{2,n}(t) (E[X_n(t)^2]-E[X_n(t)]^2)
\end{align*}
From the definition, $\bar{m}_2:=2x\bar{m}_1(x)$; since the right hand side of \eqref{eq:m1SIS} does not depend on $n$, $m_{1,n}=\bar{m}_1$; and $m_{2,n} \to \bar{m}_2$ due to \eqref{eq:m2SIS}.  Then as the right hand sides of the differential equations converge to $\bar{m}_1$ and $\bar{m}_2$, so do their solutions.

Comparing this approach to that taken in \cite{ABSIS} or \cite{BASIR}, we see that the key step in all these approaches is bounding the higher order terms (beyond the quadratic term).  While the particular inequalities used in those papers are tighter, here we use Taylor's theorem with remainder, which is more generally applicable.

\subsection{More Complicated Epidemic Model}
Consider a susceptible-infected SI model for the spread of an incurable sexually transmitted disease in a heterosexual population.  Consider a population of $n$ identical males and $n$ identical females in the susceptible or infected state.  Thus, our system state is $\tilde{X}_n=(\tilde{X}_{n,\mathrm{ms}},\tilde{X}_{n,\mathrm{mi}},\tilde{X}_{n,\mathrm{fs}},\tilde{X}_{n,\mathrm{fi}})$, the count of the number of male-susceptible, male-infected,  female-susceptible, and female-infected individuals, respectively.  The two possible transitions are a female infecting a male (reducing $\tilde{X}_{n,\mathrm{ms}}$ by one and increasing $\tilde{X}_{n,\mathrm{mi}}$ by one) at a rate $\beta_{\mathrm{m}} \tilde{X}_{n,\mathrm{fi}}\tilde{X}_{n,\mathrm{ms}}/n$ and a male infecting a female at a rate $\beta_{\mathrm{f}}\tilde{X}_{n,\mathrm{mi}}\tilde{X}_{n,\mathrm{fs}}/n$.  Such a model suits our purposes since it is simple enough to use as an example, yet more complicated than the well known SIR model.

Our normalized Markov process is $X_n(t):=\tilde{X}_n(t)/n$ on $[0,1]^4$.  It is easy to check that 
\begin{align*}
m_{1,n,\mathrm{ms}}(x)&=\bar{m}_{1,\mathrm{ms}}(x)=-\beta_{\mathrm{m}} x_{\mathrm{fi}}x_{\mathrm{ms}},\quad&m_{1,n,\mathrm{mi}}(x)&=\bar{m}_{1,\mathrm{mi}}(x)=\beta_{\mathrm{m}} x_{\mathrm{fi}}x_{\mathrm{ms}},\\
m_{1,n,\mathrm{fs}}(x)&=\bar{m}_{1,\mathrm{fs}}(x)=-\beta_{\mathrm{f}} x_{\mathrm{mi}}x_{\mathrm{fs}},\quad& m_{1,n,\mathrm{fi}}(x)&=\bar{m}_{1,\mathrm{fi}}(x)=\beta_{\mathrm{f}} x_{\mathrm{mi}}x_{\mathrm{fs}}.
\end{align*}
Using definition \eqref{eq1},
\begin{align*}
	m_{2,n}(x)=&n\beta_{\mathrm{m}} x_{\mathrm{fi}}x_{\mathrm{ms}}
	\bigl(((x_{\mathrm{ms}}-1/n)^2-x_{\mathrm{ms}}^2)
	+((x_{\mathrm{mi}}+1/n)^2-x_{\mathrm{mi}}^2)\bigr)\\
	&+n\beta_{\mathrm{f}} x_{\mathrm{mi}}x_{\mathrm{fs}}
	\bigl(((x_{\mathrm{fs}}-1/n)^2-x_{\mathrm{fs}}^2)
	+((x_{\mathrm{fi}}+1/n)^2-x_{\mathrm{fi}}^2)\bigr),
\end{align*}
and applying the identity $n(x\pm 1/n)^2-x^2=\pm 2x+1/n$,
\[
	m_{2,n}(x)=2\beta_{\mathrm{m}} x_{\mathrm{fi}}x_{\mathrm{ms}}(-x_{\mathrm{ms}}+x_{\mathrm{mi}}+1/n)
	+2\beta_{\mathrm{f}} x_{\mathrm{mi}}x_{\mathrm{fs}}(-x_{\mathrm{fs}}+x_{\mathrm{fi}}+1/n),
\]
makes it is easy to see that these equations satisfy the conditions of \nameref{thm1}.

\section{Discussion}\label{sec:conclusion}
To summarize our approach, we relied on the Kolmogorov backward equations to create differential equations for expectations of the stochastic process.  However, on the right hand sides of the equations were expectations of functions of the state (rather than functions of the expected state).  We approximated those using Taylor's theorem to derive a closed system of differential equations for the first two moments of the stochastic process.  Then we applied a standard approximation result for the continuous dependence on parameters of ODEs.

With the exception of the approximation result, these are all techniques seen in undergraduate courses.  The author hopes that the easy to check conditions of these results will allow applied users of mean-field models in fields such as epidemiology to routinely prove convergence of their results, and that the simple techniques involved in their proofs will inspire further research in the intersection of stochastic processes and ODEs. 

One limitation of our technical conditions is that they cannot be used to prove the standard fluid limits for queues.  There are two reasons: first the state space (i.e., the number in the queue) is unbounded and second, the right hand side of the mean-field equation, $\bar{m}_1(x)=-\mu I(x>0)$, is not Lipschitz since it has a discontinuity at zero (here $I(\cdot)$ is the indicator function).  Relaxing those technical conditions is an area for further research.

\paragraph{Acknowledgements.}  The author thanks Ekkehard Beck and Simon P\'eter for helpful comments.

\bibliography{refs}

\section*{Appendix}

The following is a version of the standard ODE result on the continuous dependence on parameters and initial conditions. 

\begin{lemma}[Lemma 3] Consider the initial value problems $x'=f_1(t,x)$, $x(0)=x_1$ and $x'=f_2(t,x)$, $x(0)=x_2$ with solutions $\phi_1(t)$ and $\phi_2(t)$ respectively.  If $f_1$ is Lipschitz in $x$ with constant $L$; $\norm{x_2-x_1}\leq \Delta$; and $\norm{f_1(t,x)-f_2(t,x)}\leq M$, then $\norm{\phi_1(t)-\phi_2(t)}\leq (\Delta+M/L)e^{Lt}-M/L$.
\end{lemma}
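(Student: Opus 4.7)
The strategy is the standard Grönwall route. First I would convert both initial value problems to integral form via the fundamental theorem of calculus, writing $\phi_j(t) = x_j + \int_0^t f_j(s,\phi_j(s))\,ds$ for $j=1,2$. Subtracting and taking norms gives
\[
\norm{\phi_1(t)-\phi_2(t)} \leq \Delta + \int_0^t \norm{f_1(s,\phi_1(s)) - f_2(s,\phi_2(s))}\,ds.
\]

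The next step is the standard telescoping trick: insert an auxiliary term $f_1(s,\phi_2(s))$ inside the norm in the integrand and split via the triangle inequality. The Lipschitz assumption on $f_1$ controls one piece by $L\norm{\phi_1(s)-\phi_2(s)}$, and the hypothesis $\norm{f_1(s,x)-f_2(s,x)}\leq M$ controls the other by $M$. Letting $u(t) := \norm{\phi_1(t)-\phi_2(t)}$, I would thereby obtain the scalar integral inequality
\[
u(t) \leq \Delta + Mt + L\int_0^t u(s)\,ds.
\]

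The main step is then Grönwall's inequality in its inhomogeneous form. I would compare $u$ to the unique solution $v$ of the associated linear ODE $v'(t) = M + L v(t)$ with $v(0)=\Delta$, which integrates explicitly to $v(t) = (\Delta + M/L)e^{Lt} - M/L$, matching the claimed bound exactly. The cleanest way to carry out the comparison is to multiply by the integrating factor $e^{-Lt}$ and observe that $\frac{d}{dt}\bigl(e^{-Lt}\int_0^t u(s)\,ds\bigr)$ can be bounded using the integral inequality, then integrate back. There is no real obstacle here; the only mildly delicate point is the degenerate case $L=0$, which I would handle either by direct integration (yielding the bound $\Delta + Mt$) or by observing that this is the limit of the stated expression as $L\to 0^+$. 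Everything else is routine.
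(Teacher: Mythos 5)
Your proposal is correct and follows essentially the same route as the paper: the same telescoping insertion of $f_1(s,\phi_2(s))$, the same use of the Lipschitz and $M$ bounds, and a Gr\"onwall comparison with the solution of $v'=Lv+M$, $v(0)=\Delta$. The only cosmetic difference is that you work with the integral form of Gr\"onwall while the paper bounds $\norm{u'(t)}$ directly and applies the differential form; your remark on the $L=0$ case is a sensible addition but not needed for the statement as given.
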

\begin{proof}  Let $u(t):=\phi_1(t)-\phi_2(t)$.  Then $\norm{u(0)}\leq\Delta$  and 
\begin{align*}
\norm{u'(t)}&=\norm{f_1(t,\phi_1(t))-f_2(t,\phi_2(t))}\\
&\leq \norm{f_1(t,\phi_1(t))-f_1(t,\phi_2(t))}+\norm{f_1(t,\phi_2(t))-f_2(t,\phi_2(t))}\\
&\leq L\norm{u(t)}+M.
\end{align*}
Note that the solution to $\theta'=L\theta+M$ with $\theta(0)=\Delta$ is $\theta(t)=(\Delta+M/L)e^{Lt}-M/L$.  Then by Gronwall's inequality, $\norm{u(t)}\leq (\Delta+M/L)e^{Lt}-M/L$, proving the claim.  \end{proof}

\end{document}